\newtheorem{theorem}{Theorem}
\renewcommand{\le}{\leqslant}
\renewcommand{\ge}{\geqslant}
\newcommand{\emptysystem}{\mathbf{0}}
\newcommand{\symmdiff}{\triangle}
\newcommand{\MM}{{\cal M}}
\newcommand{\CC}{{\cal C}}
\newcommand{\sem}{{\mathrm{sem}}}
\renewcommand{\SS}{{\cal S}}
\newcommand{\FF}{{\cal F}}
\newcommand{\KK}{{\cal K}}
\renewcommand{\AA}{{\cal A}}
\newcommand{\NN}{\mathbf{N}}
\newcommand{\RR}{\mathbf{R}}
\newcommand{\abs}[1]{\left\lvert#1\right\rvert}
\newcommand{\norm}[1]{\left\lVert#1\right\rVert}
\newcommand{\unlab}[2]{\left\llbracket #1\right\rrbracket_{#2}}
\newcommand{\bigo}[1]{O\mathopen{}\left(#1\right)}
\newcommand{\sst}[2]{\left\{#1\,\middle|\,#2\right\}}
\newcommand{\intervalle}[4]{\mathopen{#1}#2\mathclose{}\mathpunct{},#3\mathclose{#4}}
\newcommand{\icc}[2]{\intervalle{[}{#1}{#2}{]}}
\tikzset{%
  vertex/.style={circle, draw=black, fill=black, inner sep=0.5pt, minimum size=5pt},
  root/.style={circle, draw=black, fill=black, inner sep=0.5pt, minimum size=5pt},
  edge/.style={very thick},%
  nedge/.style={very thick,dotted},%
}
\newenvironment{tikzgraph}
{\begin{tikzpicture}\begin{scope}}
  {\end{scope}\end{tikzpicture}}
\newcommand{\unit}{15pt}
\newcommand{\fourflag}{%
 \draw (-.6*\unit,.6*\unit) node[vertex] (a0) {};%
  \draw (.6*\unit,.6*\unit) node[vertex] (a1) {};%
  \draw (.6*\unit,-.6*\unit) node[vertex] (a2) {};%
  \draw (-.6*\unit,-.6*\unit) node[vertex] (a3) {};%
}
\newcommand{\ffrootednonedge}{%
       font=\scriptsize
 \draw (-.6*\unit,.6*\unit) node[vertex] (a0) {};%
  \draw (.6*\unit,.6*\unit) node[vertex] (a1) {};%
  \draw (.6*\unit,-.6*\unit) node[vertex] (a2) {};%
 \draw (.6*\unit,-.7*\unit) node[below] {$2$};%
  \draw (-.6*\unit,-.6*\unit) node[vertex] (a3) {};%
 \draw (-.6*\unit,-.7*\unit) node[below] {$1$};%
  \draw[nedge] (a2)--(a3);
}
\newcommand{\ffrootededge}{%
       font=\scriptsize
 \draw (-.6*\unit,.6*\unit) node[vertex] (a0) {};%
  \draw (.6*\unit,.6*\unit) node[vertex] (a1) {};%
  \draw (.6*\unit,-.6*\unit) node[vertex] (a2) {};%
 \draw (.6*\unit,-.7*\unit) node[below] {$2$};%
  \draw (-.6*\unit,-.6*\unit) node[vertex] (a3) {};%
 \draw (-.6*\unit,-.7*\unit) node[below] {$1$};%
  \draw[edge] (a2)--(a3);
}
\newcommand{\tfrootededge}{%
       font=\scriptsize
 \draw (0,.7*\unit) node[vertex] (a0) {};%
  \draw (-.6*\unit,-.3*\unit) node[vertex] (a1) {};%
  \draw (.6*\unit,-.3*\unit) node[vertex] (a2) {};%
 \draw (.6*\unit,-.4*\unit) node[below] {$2$};%
 \draw (-.6*\unit,-.4*\unit) node[below] {$1$};%
  \draw[edge] (a1)--(a2);
}
\begin{document}
\title{A new lower bound based on Gromov's method of selecting heavily covered
points\thanks{The work of the first and second authors leading to this
invention has received funding from the European Research Council under the
European Union's Seventh Framework Programme (FP7/2007-2013)/ERC grant
agreement no.~259385. The second author was also supported by the project
GIG/11/E023 of the Czech Science Foundation.
The work of the third author was partially supported by
the French \emph{Agence nationale de la recherche} under reference \textsc{anr
10 jcjc 0204 01} and by the \textsc{PHC Barrande 24444 XD}.}}
\author{Daniel Kr\'al'\thanks{Computer Science Institute, Faculty of Mathematics and Physics,
Charles University, Malostransk\'e n\'am\v est\'\i{} 25, 118 00 Prague 1,
Czech Republic. E-mail: \texttt{kral@iuuk.mff.cuni.cz}. Institute for Theoretical computer science is supported as project 1M0545 by Czech Ministry of Education.}
\and	Luk{\'a}{\v s} Mach\thanks{Computer Science Institute, Faculty
of Mathematics and Physics, Charles University, Malostransk\'e n\'am\v
est\'\i{} 25, 118 00 Prague 1, Czech Republic. E-mail: \texttt{lukas@iuuk.mff.cuni.cz}.}
\and	Jean-S{\'e}bastien Sereni\thanks{CNRS (LIAFA, Universit\'e Denis Diderot), Paris, France, and Department of Applied Mathematics (KAM), Faculty of Mathematics and Physics, Charles University, Prague, Czech Republic.  E-mail: \texttt{sereni@kam.mff.cuni.cz}.}
	}
\date{}
\maketitle
\begin{abstract}
Boros and F{\"u}redi (for $d=2$) and B{\'a}r{\'a}ny (for arbitrary $d$)
proved that there exists a positive real number $c_d$ such that
for every set $P$ of $n$ points in $\RR^d$ in general position, there exists
a point of $\RR^d$ contained in at least $c_d\binom{n}{d+1}$ $d$-simplices with vertices
at the points of $P$. Gromov improved the known lower bound on $c_d$ by
topological means.
Using methods from extremal combinatorics,
we improve one of the quantities appearing in Gromov's approach and
thereby provide a new stronger lower bound on $c_d$ for arbitrary $d$.
In particular, we improve the lower bound
on $c_3$ from $0.06332$ to more than $0.07480$; the best upper bound known on $c_3$
being $0.09375$.
\end{abstract}

\section{Introduction}
\label{sect-intro}
We study an extremal graph theory problem linked to a classical geometric
problem through a recent work of Gromov~\cite{Gro10a}. The geometric
result that initiated this work is a theorem of B{\'a}r{\'a}ny~\cite{Bar82},
which extends an earlier generalization of Carath{\'e}odory's theorem
due to Boros and F{\"u}redi~\cite{BoFu84}.
\begin{theorem}[B{\'a}r{\'a}ny~\cite{Bar82}]\label{th:geom}
  Let $d$ be a positive integer. There exists a positive real number $c$ such
  that for every set $P$ of points in $\RR^d$
  that are in general position, there is a point of
  $\RR^d$ that is contained in at least
  \begin{equation}\label{eq:geom}
  c\cdot\binom{\abs{P}}{d+1}-\bigo{\abs{P}^d}
\end{equation}
  $d$-dimensional simplices spanned by the points in $P$.
\end{theorem}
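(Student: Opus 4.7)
The plan is to derive Theorem~\ref{th:geom} by combining two classical results of combinatorial geometry: Tverberg's theorem and B{\'a}r{\'a}ny's colored Carath{\'e}odory theorem. Set $n := \abs{P}$ throughout.

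First, I would apply Tverberg's theorem, which asserts that any set of $(d+1)(r-1)+1$ points in $\RR^d$ admits a partition into $r$ pairwise disjoint parts whose convex hulls share a common point. Choosing $r := \lfloor (n-1)/(d+1)\rfloor + 1$, which is asymptotically $n/(d+1)$, this produces a partition $P = P_1 \cup \cdots \cup P_r$ together with a point $t \in \RR^d$ lying in the convex hull of every $P_i$.

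Second, for each $(d+1)$-element subset $\{i_1,\ldots,i_{d+1}\}$ of $\{1,\ldots,r\}$, I would invoke the colored Carath{\'e}odory theorem on the classes $P_{i_1},\ldots,P_{i_{d+1}}$. Because $t$ lies in the convex hull of each of these classes, the theorem yields a \emph{rainbow} $d$-simplex containing $t$, that is, points $x_j \in P_{i_j}$ with $t \in \mathrm{conv}\{x_1,\ldots,x_{d+1}\}$. Simplices obtained from different $(d+1)$-subsets of indices are pairwise distinct, since they meet the parts of the Tverberg partition in different patterns, so $t$ is covered by at least $\binom{r}{d+1}$ of the $d$-simplices spanned by $P$. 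A direct estimate gives
\begin{equation*}
\binom{r}{d+1} = \frac{1}{(d+1)^{d+1}}\binom{n}{d+1} + \bigo{n^d},
\end{equation*}
so the constant $c := (d+1)^{-(d+1)}$ satisfies the conclusion, with the error term $\bigo{\abs{P}^d}$ in~\eqref{eq:geom} absorbing both the rounding of $r$ and the asymptotic approximation.

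The main obstacle is Tverberg's theorem itself, but since it is a standard result I would simply cite it; the colored Carath{\'e}odory theorem admits a short proof by a pivoting argument on barycentric coordinates, and the concluding count is elementary. General position is used implicitly to ensure that the rainbow simplices produced above are nondegenerate and genuinely spanned by $d+1$ distinct vertices, so the counting at the combinatorial level translates into a count of honest $d$-dimensional simplices.
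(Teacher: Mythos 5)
Your proposal is correct and is essentially the argument the paper relies on: Theorem~\ref{th:geom} is quoted from B\'ar\'any~\cite{Bar82} without proof, and B\'ar\'any's original argument is exactly this combination of Tverberg's theorem with the colorful Carath\'eodory theorem, the rainbow simplices for distinct $(d+1)$-tuples of Tverberg classes being distinct because each such simplex has exactly one vertex in each chosen class. The only caveat is quantitative: your count yields $c=(d+1)^{-(d+1)}$, a factor $d+1$ weaker than the bound $c_d\ge(d+1)^{-d}$ that the paper attributes to B\'ar\'any's proof, but since the statement only asserts the existence of some positive $c$, this does not affect correctness.
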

Define $c_d$ to be the supremum of all the real numbers that
satisfy~\eqref{eq:geom} in Theorem~\ref{th:geom} for the dimension $d$.

Bukh, Matou{\v s}ek and Nivasch~\cite{BMN10} established that
\[c_d\le\frac{(d+1)!}{(d+1)^{d+1}}\]
by constructing suitable configurations of $n$ points in $\RR^d$.
On the lower bound side, Boros and F{\"u}redi~\cite{BoFu84}
proved that $c_2\ge 2/9$,
which matches the upper bound; so $c_2=2/9$ (another proof
was given by Bukh~\cite{Buk06}).
B{\'a}r{\'a}ny's proof~\cite{Bar82} yields that $c_d\ge (d+1)^{-d}$.
Wagner~\cite{Wag03} improved this lower bound to
\[c_d\ge\frac{d^2+1}{(d+1)^{d+1}}.\]
Further improvements of the lower bound for $c_3$
were established by Basit et al.~\cite{BMRR10} and
by Matou\v{s}ek and Wagner~\cite{MaWa11}.

Gromov~\cite{Gro10a} developed a topological method
for establishing lower bounds on $c_d$
(Matou\v{s}ek and Wagner~\cite{MaWa11} provided
an exposition of the combinatorial components of his method, while
Karasev~\cite{Kar10} managed to simplify Gromov's approach).
His method yields a bound that matches the optimal bound
for $d=2$ and is better than that of Basit et al.~\cite{BMRR10}
for $d=3$.
We need several definitions to state Gromov's lower bound.
Fix a positive integer $d$ and a finite set $V$.
A \emph{$d$-system $E$ on $V$} is a family of $d$-element subsets of $V$.
The \emph{density} of the system $E$ is
$\norm{E}\coloneqq\abs{E}/{\binom{\abs{V}}{d}}$.
The \emph{coboundary $\delta E$} of a $d$-system $E$ on $V$
is the $(d+1)$-system composed of those $(d+1)$-element subsets of $V$
that contain an odd number of sets of $E$. The coboundary operator $\delta$
commutes with the symmetric difference, i.e.,
$\delta (A\symmdiff B)=(\delta A)\symmdiff(\delta B)$.
It is not hard to show that $\delta\delta E=\emptysystem$ for any $d$-system $E$
where $\emptysystem$ is the empty $(d+2)$-system. In fact, the converse also holds:
a $d$-system $E$ is a coboundary of a $(d-1)$-system if and only if $\delta E=\emptysystem$.

A $d$-system $E$ on $V$ is \emph{minimal} if $\norm{E}\le\norm{E'}$ for any $d$-system $E'$ on $V$
with $\delta E=\delta E'$. This is equivalent to saying that $\norm{E}\le\norm{E\symmdiff\delta D}$
for every $(d-1)$-system $D$ on $V$. Let $\MM_d(V)$ be the set of all minimal $d$-systems on $V$
and define the following function:
\[
  \varphi_d(\alpha)\coloneqq\liminf_{\abs{V}\to\infty}\min\sst{\norm{\delta
   E}}{\text{$E\in\MM_d(V)$ and $\norm{E}\ge\alpha$}}.
\]
It is easy to observe that the functions $\varphi_d$ are defined for $\alpha\in \icc{0}{1/2}$ and
$\varphi_1(\alpha)=2\alpha(1-\alpha)$. It can also be shown that $\varphi_d(\alpha)\ge\alpha$.

Gromov's lower bound on the quantity $c_d$ is given in the next theorem.
\begin{theorem}[Gromov~\cite{Gro10a}]\label{thm:gromlow}
  For every positive integer $d$, it holds that
\begin{equation}
c_d\ge\varphi_d\left(\frac{1}{2}\varphi_{d-1}\left(\frac{1}{3}\varphi_{d-2}\left(\cdots\frac{1}{d}\varphi_1\left(\frac{1}{d+1}\right)\cdots\right)\right)\right).
\label{eq:gromlow}
\end{equation}
\end{theorem}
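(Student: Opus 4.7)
My plan is to follow Gromov's topological overlap strategy. Fix a set $P$ of $n$ points in general position in $\RR^d$ and consider the affine realization in $\RR^d$ of the complete $(d+1)$-uniform simplicial complex on $P$. For a generic $q\in\RR^d$, let $F_q$ be the $(d+1)$-system on $P$ consisting of those $(d+1)$-subsets whose spanned simplex contains $q$; the aim is to produce a $q$ for which $\norm{F_q}$ is at least the right-hand side of~\eqref{eq:gromlow}, up to the lower-order term in~\eqref{eq:geom}.

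The first step is a geometric reduction. A key combinatorial observation is that $\delta F_q=\emptysystem$ for generic $q$ (by a Radon-type argument: every $(d+2)$-subset of $P$ contributes an even number of simplices containing $q$), so $F_q$ is a coboundary. I would use this, together with a suitably chosen generic flag of affine subspaces through $q$, to construct a \emph{descending chain} of minimal systems $E_1,E_2,\dots,E_{d+1}=F_q$ with $E_k\in\MM_k(V)$ and $\delta E_k=E_{k+1}$ for each $k$. The system $E_k$ encodes, for each $k$-subset of $P$, a sidedness datum relative to the $k$-th subspace in the flag.

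The second step is the cascading lower bound on $\norm{E_k}$. The base estimate $\norm{E_1}\ge 1/(d+1)$ is an elementary Carath\'eodory averaging: out of the $d+1$ vertices of a $d$-simplex containing $q$, a uniformly random one plays a prescribed role with probability $1/(d+1)$. Inductively, assume $\norm{E_k}\ge\alpha_{k-1}$; since $E_k\in\MM_k(V)$, the definition of $\varphi_k$ yields $\norm{E_{k+1}}=\norm{\delta E_k}\ge\varphi_k(\alpha_{k-1})-o(1)$ as $\abs{V}\to\infty$. The quantity $\norm{E_{k+1}}$ thus obtained is spread across $d-k+1$ combinatorial ``colors'' corresponding to which coordinate of the new subspace distinguishes a given $(k+1)$-subset; an averaging over these colors isolates a piece (or the averaged system) accounting for at least a $\frac{1}{d-k+1}$ share, producing the next lower bound $\alpha_k\ge\frac{1}{d-k+1}\varphi_k(\alpha_{k-1})$ which is fed into $\varphi_{k+1}$ at the following stage.

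Iterating from $k=1$ to $k=d$ reproduces the nested expression in~\eqref{eq:gromlow} as a lower bound for $\norm{F_q}$, completing the proof. The hard part is the first step: the simultaneous construction of the chain of minimal systems $E_k$ compatible with the geometric flag, and the verification that the color-averaging producing the factor $\frac{1}{d-k+1}$ is legitimate and that minimality in $\MM_k(V)$ can be imposed at every level without disturbing the higher coboundaries. This is the topological core of Gromov's argument, rendered combinatorial in Matou\v{s}ek and Wagner~\cite{MaWa11}; once it is established, the cascade of $\varphi_k$-estimates follows directly from their definitions.
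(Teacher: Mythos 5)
The paper does not prove Theorem~\ref{thm:gromlow}: it is Gromov's result, quoted as a black box with the combinatorial expositions of Matou\v{s}ek and Wagner~\cite{MaWa11} and Karasev~\cite{Kar10} cited for details, so your sketch has to be measured against those arguments. Its overall shape (a generic flag, a cascade through the functions $\varphi_k$, and factor bookkeeping $\frac{1}{d-k+1}$ consistent with~\eqref{eq:gromlow}) is right, but the central object you propose cannot exist as specified. You ask for a chain with $\delta E_k=E_{k+1}$ and, at the same time, $E_{k+1}\in\MM_{k+1}(V)$. By the paper's own characterization of minimality, $\norm{E_{k+1}}\le\norm{E_{k+1}\symmdiff\delta D}$ for every $D$; taking $D=E_k$ gives $\norm{E_{k+1}}\le\norm{\delta E_k\symmdiff\delta E_k}=0$, so every $E_{k+1}$ in your chain, and in particular $E_{d+1}=F_q$, would have to be the empty system. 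A nonempty coboundary is never minimal. What the actual argument tracks is different: the geometrically defined systems (faces meeting the members of the flag, or meeting a chosen half of one of them) are cocycles and are \emph{not} required to be minimal; instead one proves a lower bound $\beta_k$ on the density of \emph{every} filling of such a cocycle, i.e.\ on the minimal representative in the relevant coset, and it is to that minimal representative that the definition of $\varphi_k$ is applied to get $\norm{F_q}\ge\varphi_d(\beta_d)-o(1)$, and so on down the cascade.

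The second gap is that the steps carrying the real content are asserted rather than proved. The base case justification (``out of the $d+1$ vertices of a $d$-simplex containing $q$, a random one plays a prescribed role with probability $1/(d+1)$'') is circular --- the abundance of simplices containing $q$ is the conclusion, not an available hypothesis --- and it is not where the $\frac{1}{d+1}$ comes from; in Gromov's argument the factors $\frac{1}{2},\frac13,\dots,\frac{1}{d+1}$ arise from how the flag and the point $q$ are \emph{selected} (sweeping/pigeonhole and centerpoint-type choices that control the cofilling norms), and this selection must be made before the adversarial choice of a filling, so an a posteriori ``averaging over colors'' of a fixed decomposition of $\norm{E_{k+1}}$ does not yield $\alpha_k\ge\frac{1}{d-k+1}\varphi_k(\alpha_{k-1})$ in the quantifier order needed. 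Since you explicitly defer exactly this step to~\cite{MaWa11}, what remains is a citation wrapped around a scaffold whose stated form is inconsistent. To repair it, recast the cascade as lower bounds on minimal cofillings of the geometric cocycles attached to the flag, and make the selection of the flag (the source of the fractional factors) precise; the observation you do prove, that $\delta F_q=\emptysystem$ for generic $q$, is correct and is indeed the reason fillings exist at the top level.
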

Plugging $\varphi_1(\alpha)=2\alpha(1-\alpha)$ and
the bound $\varphi_d(\alpha)\ge\alpha$ in~\eqref{eq:gromlow},
we obtain
\begin{equation}
c_d\ge\frac{2d}{(d+1)!(d+1)}.
\label{eq:gromlow2}
\end{equation}

Improvements of the bound in~\eqref{eq:gromlow2} can be obtained by proving
stronger lower bounds on the functions $\varphi_d$.
The first step in this direction has been done by Matou{\v s}ek and Wagner.
\begin{theorem}[Matou{\v s}ek and Wagner~\cite{MaWa11}]\label{th:mawa}
  \mbox{}
  \begin{itemize}
    \item For all $\alpha\in\icc{0}{1/4}$, it holds that
\[
  \varphi_2(\alpha)\ge\frac{3}{4}\left(1-\sqrt{1-4\alpha}\right)(1-4\alpha).
\]
\item For all sufficiently small $\alpha>0$, it holds that
  \[
  \varphi_3(\alpha)\ge\frac{4}{3}\alpha-O(\alpha^2).
  \]
\end{itemize}
\end{theorem}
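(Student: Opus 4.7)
For the bound on $\varphi_2$, my plan is to combine an exact counting identity with the minimality-as-cut condition. Let $E$ be a minimal 2-system on $V$ with $|V|=n$ and $\|E\|\geq\alpha$. Expanding the indicator that a triple $\{u,v,w\}$ has an odd number of edges as a polynomial in the three edge-indicators, one gets
\[
|\delta E| = (n-2)|E| - 2P(E) + 4t(E),
\]
where $P(E)=\sum_v\binom{\deg(v)}{2}$ counts cherries and $t(E)$ counts triangles. Thus the task reduces to upper-bounding $P(E)-2t(E)$ under minimality. Testing minimality with 1-systems $D=\{v\}$ yields $\deg(v)\leq(n-1)/2$; testing with $D=N(v)$ yields that at most $\deg(v)(n-\deg(v))/2$ edges of $E$ cross from $N(v)$ to $V\setminus N(v)$, which when expanded relates the local cherry count at $v$ to the number of triangles through $v$. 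Summing the local inequalities over all vertices and applying convexity (Cauchy--Schwarz) to the degree sequence constrained by $\sum_v\deg(v)=2|E|$, one extracts the claimed bound; the clean form emerges after the substitution $\alpha=\beta(1-\beta)$, $\sqrt{1-4\alpha}=1-2\beta$, which rewrites the target $\frac{3}{4}(1-\sqrt{1-4\alpha})(1-4\alpha)$ as $\frac{3\beta(1-2\beta)^2}{2}$ and corresponds to a concrete degree distribution.

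For the asymptotic bound on $\varphi_3$, the plan is analogous but with 3-systems. Let $E$ be a minimal 3-system of small density $\alpha$ and for $k\in\{0,1,2,3,4\}$ let $T_k$ denote the number of 4-subsets of $V$ containing exactly $k$ triples of $E$. Then $|\delta E|=T_1+T_3$, and the counting identities
\[
T_1+2T_2+3T_3+4T_4 = (n-3)|E|, \qquad T_2+3T_3+6T_4 = \sum_{\{u,v\}}\binom{c_{uv}}{2},
\]
where $c_{uv}$ is the codegree of the pair $\{u,v\}$, translate the problem into bounding $\sum\binom{c_{uv}}{2}$ and the ``overlap'' types $T_2,T_3,T_4$. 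Minimality tested against a single edge $D=\{uv\}$ gives the codegree bound $c_{uv}\leq(n-2)/2$; tested against richer 2-systems (triangles, short cycles, matchings, and more general graphs) it furnishes additional second-moment control on the codegrees. Putting these together in the sparse regime gives upper bounds on $T_2+T_3+2T_4$ that are of order $\alpha^2\binom{n}{4}$ up to lower-order terms; substituting back yields $\|\delta E\|\geq \frac{4}{3}\alpha-\bigo{\alpha^2}$.

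The main obstacle in both parts is converting the abstract minimality condition (``XORing with any $(d-1)$-coboundary does not decrease the size'') into quantitative inequalities compatible with the explicit counting identities. For $\varphi_2$, the difficulty is identifying the precise family of cuts $\delta D$ whose tests saturate the bound, and then carrying out the constrained optimisation over degree sequences so as to recover the exact analytic shape of the extremiser rather than a merely asymptotic estimate. For $\varphi_3$, the challenge is that the trivial codegree bound alone does not suffice to bring the coefficient down to the announced $\frac{4}{3}$: one must combine several minimality tests against carefully chosen auxiliary graphs in order to control the second moment of codegrees together with the higher-type overlap counts $T_2,T_3,T_4$, which is where the argument becomes most delicate and has the flavour of a flag-algebra style computation.
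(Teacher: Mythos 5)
First, note that this theorem is not proved in the paper at all: it is quoted verbatim from Matou\v{s}ek and Wagner~\cite{MaWa11} and used as a black box, so there is no in-paper proof to compare against; your attempt has to be measured against what it would take to actually establish the two bounds. Your counting identities are correct (for the graph case, $|\delta E|=(n-2)|E|-2P(E)+4t(E)$ with $P(E)=\sum_v\binom{\deg v}{2}$, and the two identities for $T_1,\dots,T_4$ and the codegrees in the $3$-uniform case), and so are the two elementary consequences of minimality you extract ($\deg(v)\le (n-1)/2$ from $D=\{v\}$, and the cut condition for $D=N(v)$). But for the $\varphi_2$ bound the decisive step is exactly the one you leave as ``one extracts the claimed bound,'' and the ingredients you list do not suffice: the $N(v)$-cut condition says the number of crossing edges, which equals $\sum_{u\sim v}\deg(u)-2t_v$, is at most $\tfrac12\deg(v)(n-\deg(v))$, and summing this over all $v$ gives precisely
\[
\sum_v\deg(v)^2-4t(E)\ \le\ \tfrac{2}{3}\,n|E|,
\]
which, plugged into your identity $|\delta E|=n|E|-\bigl(\sum_v\deg(v)^2-4t(E)\bigr)$, yields only $|\delta E|\ge\tfrac13 n|E|$, i.e.\ the trivial bound $\varphi_2(\alpha)\ge\alpha(1+o(1))$. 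To reach $\tfrac34(1-\sqrt{1-4\alpha})(1-4\alpha)\approx\tfrac32\alpha$ for small $\alpha$ you would need $\sum_v\deg(v)^2-4t(E)\lesssim\tfrac12 n|E|$, and neither the degree cap (which bounds $\sum_v\deg(v)^2$ by $(n-1)|E|$, far too weak) nor Cauchy--Schwarz (which bounds $\sum_v\deg(v)^2$ from \emph{below}) closes this gap. So a genuinely new idea --- a less lossy, per-vertex or weighted use of the cut inequalities, or additional test systems $D$ --- is missing, and without it the ``clean form'' does not emerge.

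The $\varphi_3$ sketch has a sharper problem: it is internally inconsistent. From your own identities, $\|\delta E\|=4\alpha-\frac{2(T_2+T_3+2T_4)}{\binom n4}$ up to lower-order terms, so if minimality really forced $T_2+T_3+2T_4=O(\alpha^2)\binom n4$ you would obtain $\|\delta E\|\ge 4\alpha-O(\alpha^2)$, not $\tfrac43\alpha-O(\alpha^2)$. That stronger conclusion is not believable: the very fact that the constant in the theorem is $\tfrac43$ indicates that extremal minimal $3$-systems have overlap counts of order $\alpha\binom n4$, so no minimality argument can push $T_2+T_3+2T_4$ down to order $\alpha^2\binom n4$. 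What you actually need is $T_2+T_3+2T_4\le\bigl(\tfrac43\alpha+O(\alpha^2)\bigr)\binom n4$, and nothing in the proposal --- the codegree bound $c_{uv}\le (n-2)/2$ or the unspecified ``tests against richer $2$-systems'' --- produces the constant $\tfrac43$; the second-moment control you invoke is asserted, not derived. In short, both parts correctly set up the bookkeeping and correctly translate minimality into cut inequalities, but in both cases the quantitative heart of the argument is absent, and in the $3$-dimensional case the stated target of the estimate is not even compatible with the claimed conclusion; for complete arguments you should consult the proofs in~\cite{MaWa11} directly.
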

Our main result asserts a stronger lower bound on $\varphi_2(\alpha)$
for $\alpha\in\icc{0}{2/9}$, which are the values appearing in Theorem~\ref{thm:gromlow}.
\begin{theorem}
\label{thm:main}
For all $\alpha\in\icc{0}{2/9}$, it holds that
\[\varphi_2(\alpha)\ge\frac{3}{4}\alpha(3-\sqrt{8\alpha+1}).\]
\end{theorem}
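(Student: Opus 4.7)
The plan is to reduce the claim to an upper bound on the density of ``dense'' triples. Writing $t,p,s,n_0$ for the limit densities of the four unlabelled $3$-vertex configurations (triangle, cherry, one edge with an isolated vertex, and empty triple), the definitions give $\|\delta E\|=t+s$, while double-counting edge-triple incidences yields $3t+2p+s=3\alpha$. Combining the two gives the central identity $\|\delta E\|=3\alpha-2(t+p)$, so it suffices to prove the upper bound $t+p\le\tfrac{3}{8}\alpha\bigl(1+\sqrt{8\alpha+1}\bigr)$. Using $\sum_v\binom{d(v)}{2}=3T+P$, the quantity $T+P$ equals $\sum_v\binom{d(v)}{2}-2T$, so the task reduces to controlling the degree second moment together with the triangle count.

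I would next bring in minimality in two complementary ways. Specialising $\|E\|\le\|E\triangle\delta D\|$ to $D=\{v\}$ produces the maximum-degree inequality $d(v)\le(n-1)/2$; with $D$ arbitrary it produces the sharper cut condition $e(D,V\setminus D)\le|D|(n-|D|)/2$. I would apply the cut condition to locally defined subsets such as $D=N(v)\cup\{v\}$, and sum the resulting inequalities over $v\in V$ to obtain a single linear constraint linking $\sum_v d(v)^2$, $T$, and $\alpha$.

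The heart of the proof is expected to be a Cauchy--Schwarz inequality in flag-algebra style, applied to rooted flags over the two $2$-vertex types: the rooted edge and the rooted non-edge. Under each type, the third vertex is either adjacent to both roots, to exactly one, or to neither, giving three $\sigma$-flags on three vertices; the non-negativity of the expectation, under the random embedding of the root, of a carefully chosen squared linear combination of these rooted flags re-expresses itself in terms of $4$-vertex rooted flags and, after unlabelling, yields a quadratic inequality relating $t$, $p$, $s$, and $\alpha$. Adding this quadratic inequality to the linear consequences of minimality should produce a quadratic inequality in the single unknown $\beta=\|\delta E\|$ (for fixed $\alpha$), whose relevant root is exactly $\tfrac{3}{4}\alpha\bigl(3-\sqrt{8\alpha+1}\bigr)$.

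The principal obstacle is identifying the correct squared combination. To produce a tight bound at $\alpha=2/9$ the squared expression must vanish on the expected extremal configuration, namely a copy of $K_{n/3,n/3}$ together with $n/3$ isolated vertices (which realises $\|\delta E\|=2/9$); this tightness requirement, together with the available linear constraints from minimality, essentially pins down the coefficients of the combination. Once the right combination is chosen, verifying that the resulting quadratic in $\beta$ collapses to the stated radical expression is a direct computation, and checking that the argument is valid throughout the interval $\alpha\in\icc{0}{2/9}$ should follow by monotonicity of the bound together with the fact that the inequality is trivial at $\alpha=0$.
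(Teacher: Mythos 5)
Your overall framework---bookkeeping identities among $3$-vertex densities, Seidel-minimality cut inequalities, and a square of a linear combination of rooted $3$-vertex flags, with coefficients tuned so that the final inequality is tight at the extremal configuration at $\alpha=2/9$---is the approach of the paper. However, as concretely specified, the minimality information you allow yourself is too weak, and this is a genuine gap. The only consequences of Seidel minimality entering your final combination are the degree bound $d(v)\le(n-1)/2$ and the neighbourhood cuts $D=N(v)\cup\{v\}$ \emph{summed over} $v$; in the limit the summed inequality is exactly the $3$-vertex constraint $q_\alpha(P_3)\le q_\alpha(\overline{P_3})$ (each induced path and each one-edge triple is counted twice, the remaining terms are of lower order), while every squared rooted combination is valid for \emph{all} graphs and hence carries no minimality information at all. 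These constraints cannot certify the bound: consider the graphon made of a complete bipartite graph with parts of measures $a$ and $b$, $a+b=2/3$, $a\le 1/2$, plus an independent part of measure $1/3$, and let $\alpha=2ab\in(1/6,2/9)$. Its edge density is $\alpha$, all degrees are below $1/2$, it satisfies $q(P_3)=q(\overline{P_3})=\alpha$ (so your summed cut constraint holds, with equality), and it satisfies every square; yet its odd-triple density is exactly $\alpha$, which is strictly smaller than $\frac34\alpha(3-\sqrt{8\alpha+1})$ for $\alpha<2/9$. This graphon is of course not Seidel-minimal, but the violating cut is one side of the bipartite graph against the rest, i.e.\ the common neighbourhood of a \emph{non-adjacent pair} of vertices; detecting it requires a pair-rooted inequality. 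That is precisely the extra ingredient of the paper, namely inequality~\eqref{secC}, obtained by applying minimality to the cut between the common neighbours of two non-adjacent vertices and all remaining vertices (a constraint rooted at $\overline{K_2}$, genuinely of $4$-vertex type). Your plan discards exactly this information at the moment you aggregate the vertex cuts into a single linear constraint.

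Even with that ingredient restored, two further points would need real work. First, the certificate is never exhibited: the paper's proof consists of explicit coefficients, depending on the true limiting edge density $\beta=q_\alpha(K_2)\ge\alpha$, for the identities, for the pair-rooted cut inequality and for a one-parameter family of squares over the rooted edge (optimised at $\xi=\frac{\sqrt{1+8\beta}-1}{2\beta}-1$), followed by a coordinate-wise comparison with $\overline{P_3}+K_3$ over the eleven $4$-vertex graphs. Your tightness heuristic at $K_{n/3,n/3}$ plus isolated vertices pins the combination down only at $\alpha=2/9$; for smaller $\alpha$ the proved bound is not attained by any graph, so the coefficients must be found and verified as functions of $\beta$. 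Second, your identity $3t+2p+s=3\alpha$ uses the actual edge density, which is only known to be at least $\alpha$; the paper proves the bound as a function of $\beta$ and then needs both the monotonicity of $x\mapsto\frac34x(3-\sqrt{8x+1})$ on $[0,2/9]$ and the fact that its values on $[2/9,1/2]$ remain at least $2/9$. Your closing remark about monotonicity gestures at the first point but does not address the case $\beta>2/9$.
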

When plugged into Theorem~\ref{thm:gromlow}, our bound yields that
$c_3>0.07433$.
For comparison, the
earlier bounds of Wagner~\cite{Wag03}, Basit et al.~\cite{BMRR10},
Gromov~\cite{Gro10a} and Matou\v{s}ek and Wagner~\cite{MaWa11}
are $c_3\ge0.03906$, $c_3\ge0.05448$, $c_3\ge0.0625$ and $c_3\ge0.06332$, respectively.
However, the bound on $c_3$ can be further improved as we now explain.

Matou\v sek and Wagner~\cite{MaWa11}
improved the bound on $c_3$ through a combinatorial argument,
which uses bounds on $\varphi_2$ and $\varphi_3$ as black-boxes.
The proof employs a structure called pagoda (of dimension 3) consisting of
a $4$-system $G$ (which is referred to as the \textit{top} of the pagoda),
$3$-systems $F_{ijk}$ (with $1\le i<j<k\le 4$),
$2$-systems $E_{ij}$ (with $1\le i<j\le 4$) and $1$-systems $V_i$ (with $1\le i\le 4$).
For a precise definition of these sets and their interplay, we refer the reader to \cite[Section 6]{MaWa11}.
Any lower bound on the density of $G$ in a pagoda is also a lower bound on $c_3$.
Gromov's approach is applicable to pagodas and
it yields that $||G|| \ge \frac{1}{16} = 0.0625$ (using the trivial bounds on $\varphi_2$ and $\varphi_3$).
Matou\v sek and Wagner investigated pagodas with $||G|| = 0.0625 + \varepsilon$
and they obtain a contradiction for $\varepsilon \le 0.00082$; this proves that $||G|| \ge 0.06332$.

We can improve the bound using our Theorem~\ref{thm:main} on $\varphi_2$
by investigating pagodas with density $3(3-\sqrt{2})/64+\varepsilon$.
This leads to the following system of inequalities:
\begin{eqnarray*}
\varphi_2\left(0.125 + 2 \cdot \varepsilon_0\right) & \le & 2 \cdot \left(\frac{3(3-\sqrt{2})}{64} + \varepsilon\right) \\
\varphi_1\left(0.25 + \varepsilon_1\right) & \le & 3 \cdot \left(0.125 + 2\varepsilon_0\right) \\
4 \cdot \varphi_1\left(0.25 - 3\varepsilon_1\right) & \ge & 4\left(\frac{3}{8} - \varepsilon_2\right) \\
2 \cdot \left(\frac{3(3-\sqrt{2})}{64} + \varepsilon\right) & \ge & -6\varepsilon_0 + 24\varepsilon_1^2 + 2\varepsilon_1\varepsilon_2 - \frac{27}{4}\varepsilon_1 - \frac{3}{2}\varepsilon_2 + \frac{3}{16}
\end{eqnarray*}
This system of inequalities together with
the exact value of $\varphi_1$, Theorem~\ref{thm:main} and
the trivial (linear) bound on $\varphi_3$ yields
a contradiction for every $\varepsilon \le 0.00047$.
This leads to the lower bound $c_3 \ge 0.07480$.

The definition of the function $\varphi_2$ can naturally be cast
in the language of graphs.
A \emph{cut} of a graph $G$ is a partition
of the vertices of $G$ into two (disjoint) parts; a (non-)edge
that crosses the partition is said to be \emph{contained} in the cut.
A graph is \emph{Seidel-minimal} if no cut contains more edges
than non-edges. It is straightforward to see that a graph $G$ with vertex set $V$
is Seidel-minimal if and only if its edge-set viewed as a $2$-system
is minimal. Let $\SS_n(\alpha)$ be the set of all Seidel-minimal graphs
on $n$ vertices with density at least $\alpha$, i.e.,
with at least $\alpha\binom{n}{2}$ edges. Further,
let $\SS(\alpha)$ be the union of all $\SS_n(\alpha)$.

A triple $T$ of vertices of a graph $G$ is \emph{odd} if the subgraph of $G$
induced by $T$ contains precisely either one or three edges.
Finally, let $\varphi_g(G)$ for a graph $G$ be the density of odd triples in $G$,
i.e.,
\[\varphi_g(G)=\frac{\abs{\sst{T\in \binom{V(G)}{3}}{\text{$T$ is
odd}}}}{\binom{\abs{V(G)}}{3}}.\]
It is not hard to show that for every $\alpha\in\icc{0}{1/2}$,
\[\varphi_2(\alpha)=\liminf_{n\to\infty}\min\sst{\varphi_g(G)}{G\in\SS_n(\alpha)}.\]

Using this reformulation to the language of graph theory, we show that
$\varphi_2(\alpha)\ge\frac{3}{4}\alpha(3-\sqrt{8\alpha+1})$ for $\alpha\in \icc{0}{2/9}$.
Our proof is based on the notion of flag algebras developed by
Razborov~\cite{Raz07}, which builds on the work of Lovász and
Szegedy~\cite{LoSz06} on graph limits and of Freedman et al.~\cite{FLS07}.
The notion was further applied, e.g.,
in~\cite{BaTa11,Grz11,HHK+11a,HHK+11b,HKN09,Raz10b}.
We do not use the full strength of this notion here and we survey
the relevant parts in Section~\ref{sect-flag} to make the paper as much
self-contained as possible. In Section~\ref{sect-first}, we establish
a weaker bound $\varphi_2(\alpha)\ge\frac{9}{7}\alpha(1-\alpha)$ using just some
of the methods presented in Section~\ref{sect-flag}. The purpose of
Section~\ref{sect-first} is to get the reader acquainted with the notation.
Our main result is proved in Section~\ref{sect-second}.

\section{Flag algebras}
\label{sect-flag}
In this section, we review some of the theory related to flag algebras,
which were introduced by Razborov~\cite{Raz07}. We focus on the concepts that
are relevant to our proof.
The reader is referred to the seminal paper of Razborov~\cite{Raz07}
for a complete and detailed exposition of the topic.

Fix $\alpha>0$ and consider a sequence of graphs $(G_i)_{i\in\NN}$ from $\SS(\alpha)$ such that
\[
  \lim_{i\to\infty} \abs{V(G_i)}=\infty\quad\text{and}\quad\lim_{i\to\infty}\varphi_g(G_i)=\varphi_2(\alpha).
\]
Let $p(H,H_0)$ be the probability that a randomly chosen subgraph of $H_0$
with $\abs{V(H)}$ vertices is isomorphic to $H$. The sequence $G_i$ must contain
a subsequence $(G_{i_j})_{j\in\NN}$ such that $\lim_{j\to\infty}p(H,G_{i_j})$
exists for every graph $H$. Define $q_\alpha(H)\coloneqq\lim_{j\to\infty}p(H,G_{i_j})$.
Observe that the definition of $q_\alpha$ implies that $q_\alpha\left(K_2\right)\ge\alpha$ and
$q_\alpha\left(\overline{P_3}\right)+q_\alpha\left(K_3\right)=\varphi_2(\alpha)$
where $\overline{P_3}$
is the complement of the $3$-vertex path.

The values of $q_\alpha\left(H\right)$ for various graphs $H$ are highly correlated.
Let $\FF$ be the set of all graphs and $\FF_\ell$ the set of graphs with $\ell$ vertices.
Extend the mapping $q_\alpha\left(H\right)$ from $\FF$ to $\RR\FF$ by
linearity,
where $\RR\FF$ is the linear space of formal linear combinations of the elements of $\FF$
with real coefficients.
Next, let $\KK$ be the subspace of $\RR\FF$
generated by the elements of the form \[H_0-\sum_{H\in\FF_\ell}p(H_0,H)H\]
for all graphs $H_0$ and all $\ell>\abs{V(H_0)}$.
Since the quantity $p(H_0,G)$ and the sum $\sum_{H\in\FF_\ell}p(H_0,H)p(H,G)$
are equal for any graph $G$ with at least $\ell$ vertices,
$\KK$ is a subset of the kernel of $q_\alpha$,
i.e., $q_\alpha\left(F\right)=q_\alpha\left(F+F'\right)$ for every $F\in\RR\FF$ and $F'\in\KK$.

Let $p(H_1,H_2;H_0)$ be the probability that two randomly chosen disjoint subsets $V_1$ and $V_2$
with cardinalities $\abs{V(H_1)}$ and $\abs{V(H_2)}$ induce in $H_0$ subgraphs isomorphic to $H_1$ and
$H_2$, respectively.
For two graphs $H_1$ and $H_2$, define their product to be
\[H_1\times H_2\coloneqq\sum_{H_0\in\FF_\ell}p(H_1,H_2;H_0)H_0\]
where $\ell=\abs{V(H_1)}+\abs{V(H_2)}$.
The product operator can be extended to $\RR\FF\times\RR\FF$ by linearity.
Since the product operator defined in this way is consistent with the equivalence relation
on the elements of $\RR\FF$ induced by $\KK$, we can consider the quotient $\AA\coloneqq\RR\FF/\KK$
as an algebra with addition and multiplication.
Since $q_\alpha$ is consistent with $\KK$, the function $q_\alpha$ naturally gives rise
to a mapping from $\AA$ to $\RR$, which is in fact a homomorphism from $\AA$ to $\RR$.
In what follows, we use $q_\alpha$ for this homomorphism exclusively. To simplify
our notation, we will use $q_\alpha\left(F\right)$ for $F\in\RR\FF$ but
we also keep in mind that
$F$ stands for a representative of the equivalence class of $\RR\FF/\KK$.

A homomorphism $q:\AA\to\RR$ is \emph{positive} if $q(F)\ge 0$
for every $F\in\FF$. Positive homomorphisms are precisely those corresponding
to the limits of convergent graph sequences.
We write $F\ge 0$ for $F\in\AA$ if $q(F)\ge 0$ for any positive homomorphism
$q$. Such $F\in\AA$ form the semantic cone $\CC_\sem(\AA)$.
Razborov~\cite{Raz07} developed various general and deep methods
for proving that $F\ge 0$ for $F\in\AA$. Here, we will use only one of them,
which we now present. The reader may also check the paper~\cite{Raz10b}
for the exposition of the method in a more specific context.

Consider a graph $\sigma$ and let $\FF^\sigma$
be the set of graphs $G$ equipped with a mapping $\nu:\sigma\to V(G)$
such that $\nu$ is an embedding of $\sigma$ in $G$, i.e.,
the subgraph induced by the image of $\nu$ is isomorphic to $\sigma$.
We can extend the definitions of the quantities $p(H,H_0)$ and $p(H_1,H_2;H_0)$
to this ``labeled'' case by requiring that the randomly chosen sets
always include the image of $\nu$ and preserve the mapping $\nu$.
In particular, $p(H_1,H_2;H_0)$ is the probability that two randomly chosen
supersets of the image of $\sigma$ in $H_0$ with sizes $V(H_1)$ and $V(H_2)$ that
intersect exactly on $\sigma$ induce subgraphs of $H_0$ isomorphic to $H_1$ and $H_2$;
Similarly as before, one can define $\KK^\sigma$, $\AA^\sigma=\RR\FF^\sigma/\KK^\sigma$ as
an algebra with addition and multiplication, positive homomorphisms, etc.

The intuitive interpretation of homomorphisms from $\AA^\sigma$ to $\RR$
is as follows: for a fixed embedding $\nu$ of $\sigma$,
the value $q_\nu(F)$ for $F\in\FF^\sigma$ is the probability that
a randomly chosen superset of the image of $\nu$
induces a subgraph isomorphic to $F$.
A positive homomorphism $q$ from $\AA$ to $\RR$
gives rise to a unique probability distribution
on positive homomorphisms $q^\sigma$
from $\AA^\sigma$ to $\RR$ such that this probability distribution
is the limit of the probability distributions of homomorphisms $q_\nu$ from $\AA^\sigma$ to $\RR$
given by random choices of $\nu$ in the graphs in any convergent sequence corresponding to $q$,
see~\cite[Section 3.2]{Raz07} for details.

Consider a graph $H$ with an embedding $\nu$ of $\sigma$ in $G$.
Define $\unlab{H}{\sigma}$ to be the element $p\cdot H$ of $\AA$
where $p$ is the probability that
a randomly chosen mapping $\nu$ from $V(\sigma)$ to $V(H)$ is an embedding of $\sigma$ in $H$.
Hence, the operator $\unlab{\cdot}{\sigma}$ maps elements of $\FF^\sigma$ to $\AA$ and
it can be extended from $\FF^\sigma$ to $\AA^\sigma$ by linearity.
For a positive homomorphism $q$ from $\AA$ to $\RR$,
the value of $q(\unlab{H}\sigma)$ for $H\in\AA^\sigma$ is
the expected value of $q^\sigma(H)$ with respect to the probability distribution
on $q^\sigma$ corresponding to $q$.
In particular, if $q^\sigma(H)\ge 0$ with probability one, then $q(\unlab{H}\sigma)\ge 0$.

\subsection{Example}
\label{ssect-example}
As an example of the introduced formalism, we prove that $\varphi_2(\alpha)\ge\alpha$.
The following notation is used: $K_n$ is the complete graph with $n$ vertices,
$P_n$ is the $n$-vertex path and $\overline{K}_n$ and $\overline{P}_n$ are
their complements, respectively.
We also use $1$ for $K_1$ to simplify the notation. The following elements of $\AA^1$
will be of particular interest to us:
$P_3^{1,b}$ is $P_3$ with $1$ embedded to the end vertex of the path and
$P_3^{1,c}$ is $P_3$ with $1$ embedded to the central vertex;
$\overline{P_3}^{1,b}$ and $\overline{P_3}^{1,c}$ are their complements,
respectively.
See Figure~\ref{fig:ill} for an illustration of this notation.

\begin{figure}
    \begin{center}
        \begin{tikzgraph}
\matrix[column sep=4em, row sep=1em] {
 \draw[fill=black!30] (0,0) circle (3mm);
 \draw[edge] (0,0) node[vertex] (a0) {} -- (.7,1) node[vertex]
 (a1) {} -- (1.4,0) node[vertex] (a1) {};
  \draw[nedge] (a0)--(a1);
 \node at (.7,-1) () {$P_3^{1,b}$}; &
\draw[fill=black!30] (.7,1) circle (3mm);
 \draw[edge] (0,0) node[vertex] (a0) {} -- (.7,1) node[vertex]
 (a1) {} -- (1.4,0) node[vertex] (a1) {};
  \draw[nedge] (a0)--(a1);
\node at (.7,-1) () {$P_3^{1,c}$}; &
\draw[fill=black!30] (0,0) circle (3mm);
 \draw[edge] (0,0) node[vertex] (a0) {};
 \draw[edge] (.7,1) node[vertex] (a1) {};
 \draw[edge] (1.4,0) node[vertex] (a2) {};
 \draw[nedge] (a1)--(a2);
 \draw[nedge] (a1)--(a0);
 \draw[edge] (a0)--(a2);
 \node at (.7,-1) () {$\overline{P_3}^{1,b}$}; &
\draw[fill=black!30] (.7,1) circle (3mm);
 \draw[edge] (0,0) node[vertex] (a0) {};
 \draw[edge] (.7,1) node[vertex] (a1) {};
 \draw[edge] (1.4,0) node[vertex] (a2) {};
 \draw[nedge] (a1)--(a2);
 \draw[nedge] (a1)--(a0);
 \draw[edge] (a0)--(a2);
\node at (.7,-1) () {$\overline{P_3}^{1,c}$}; \\
};
        \end{tikzgraph}
    \end{center}
\caption{Four elements of $\AA^1$.}\label{fig:ill}
\end{figure}

Consider the homomorphism $q_\alpha$ from $\AA$ to $\RR$.
Recall that $\alpha\le q_\alpha\left(K_2\right)$.
Since it holds that
\[K_2-\frac{1}{3}\overline{P_3}-\frac{2}{3}P_3-K_3\in\KK,\]
we obtain
\begin{equation}
\alpha\le q_\alpha\left(\frac{1}{3}\overline{P_3}+\frac{2}{3}P_3+K_3\right).\label{ex1}
\end{equation}
We now use that the graphs in the sequence defining $q_\alpha$ are Seidel-minimal.
Let $G_i$ be a graph in this sequence, $n$ the number of its vertices and $v$ an arbitrary vertex
of $G_i$.
Let $A$ be the neighbors of $v$ and $B$ its non-neighbors.
Since $G$ is Seidel-minimal, the number of edges between $A$ and $B$
does not exceed the number of non-edges between $A$ and $B$
(increased by $\bigo{n}$ for the inclusion of $v$ in one or the other side of the cut;
however, this term will vanish in the limit). So, if $\sigma=1$ is an embedding of $K_1$,
it holds that $q_\alpha^1(\overline{P_3}^{1,b}-P_3^{1,b})\ge 0$ with probability one (the term
$q_\alpha^1(\overline{P_3}^{1,b})$ represents the number of non-edges between neighbors and
non-neighbors of the target vertex of $\sigma$ and $q_\alpha^1(P_3^{1,b})$ the number of edges).
Therefore, we obtain
\begin{equation}
0\le q_\alpha\left(\unlab{\overline{P_3}^{1,b}-P_3^{1,b}}{1}\right).\label{ex2}
\end{equation}
Applying the operator $\unlab{\cdot}{1}$ in~\eqref{ex2} yields that
\begin{equation}
0\le q_\alpha\left(\frac{2}{3}\overline{P_3}-\frac{2}{3}P_3\right).\label{ex3}
\end{equation}
Summing~\eqref{ex1} and~\eqref{ex3} (recall that $q_\alpha$ is a homomorphism
from $\AA$ to $\RR$), we obtain
\[\alpha\le q_\alpha\left(\overline{P_3}+K_3\right)=\varphi_2(\alpha).\]
This completes the proof.

A similar argument applied to the algebra based on $d$-uniform hypergraphs
yields that $\varphi_d(\alpha)\ge\alpha$. However, since we do not want to introduce
additional notation not necessary for the exposition in the rest of the paper,
we omit further details.

\section{First bound}
\label{sect-first}
To become more acquainted with the method, we now present a bound that is
both weaker and simpler than our main result.
Fix the enumeration of $4$-vertex graphs as in Figure~\ref{fig-four}.
To simplify our formulas, $q_\alpha\left(\sum_{i=1}^{11}\xi_i F_i\right)$
shall simply be written $q_\alpha\left(\xi_1,\cdots,\xi_{11}\right)$.

\begin{figure}
\begin{center}
  \scalebox{.85}{
  \begin{tabular}{ccccccccccc}
    \toprule
$F_1$&$F_2$&$F_3$&$F_4$&$F_5$&$F_6$&$F_7$&$F_8$&$F_9$&$F_{10}$&$F_{11}$\\
    \midrule
  \begin{tikzgraph}
\fourflag
  \draw[nedge] (a3)--(a0);
  \draw[nedge] (a0)--(a2);
  \draw[nedge] (a1)--(a0);
  \draw[nedge] (a1)--(a3);
  \draw[nedge] (a2)--(a3);
  \draw[nedge] (a2)--(a1);
    \useasboundingbox (-1*\unit,-1*\unit) rectangle (1*\unit,1*\unit);
  \end{tikzgraph}
&
\begin{tikzgraph}
    \fourflag
    \draw[edge] (a0)--(a3);
  \draw[nedge] (a0)--(a2);
  \draw[nedge] (a1)--(a0);
  \draw[nedge] (a1)--(a3);
  \draw[nedge] (a2)--(a3);
  \draw[nedge] (a2)--(a1);
    \useasboundingbox (-1*\unit,-1*\unit) rectangle (1*\unit,1*\unit);
  \end{tikzgraph}
&
\begin{tikzgraph}
  \fourflag
  \draw[edge] (a0)--(a3);
  \draw[edge] (a1)--(a2);
  \draw[nedge] (a0)--(a2);
  \draw[nedge] (a1)--(a0);
  \draw[nedge] (a1)--(a3);
  \draw[nedge] (a2)--(a3);
    \useasboundingbox (-1*\unit,-1*\unit) rectangle (1*\unit,1*\unit);
\end{tikzgraph}
&
\begin{tikzgraph}
  \fourflag
  \draw[edge] (a0)--(a3);
  \draw[edge] (a3)--(a2);
  \draw[nedge] (a0)--(a2);
  \draw[nedge] (a1)--(a0);
  \draw[nedge] (a2)--(a1);
  \draw[nedge] (a3)--(a1);
    \useasboundingbox (-1*\unit,-1*\unit) rectangle (1*\unit,1*\unit);
\end{tikzgraph}
&
\begin{tikzgraph}
  \fourflag
  \draw[edge] (a0)--(a3);
  \draw[edge] (a0)--(a1);
  \draw[edge] (a1)--(a3);
  \draw[nedge] (a2)--(a0);
  \draw[nedge] (a2)--(a1);
  \draw[nedge] (a3)--(a2);
    \useasboundingbox (-1*\unit,-1*\unit) rectangle (1*\unit,1*\unit);
\end{tikzgraph}
&
\begin{tikzgraph}
  \fourflag
  \draw[edge] (a0)--(a3)--(a2)--(a1);
  \draw[nedge] (a0)--(a2);
  \draw[nedge] (a1)--(a3);
  \draw[nedge] (a1)--(a0);
    \useasboundingbox (-1*\unit,-1*\unit) rectangle (1*\unit,1*\unit);
\end{tikzgraph}
&
\begin{tikzgraph}
  \fourflag
  \draw[edge] (a0)--(a3);
  \draw[edge] (a3)--(a2);
  \draw[edge] (a1)--(a3);
  \draw[nedge] (a0)--(a2);
  \draw[nedge] (a2)--(a1);
  \draw[nedge] (a1)--(a0);
    \useasboundingbox (-1*\unit,-1*\unit) rectangle (1*\unit,1*\unit);
\end{tikzgraph}
&
\begin{tikzgraph}
  \fourflag
  \draw[edge] (a0)--(a3);
  \draw[edge] (a3)--(a2);
  \draw[edge] (a1)--(a2);
  \draw[edge] (a1)--(a0);
  \draw[nedge] (a0)--(a2);
  \draw[nedge] (a1)--(a3);
    \useasboundingbox (-1*\unit,-1*\unit) rectangle (1*\unit,1*\unit);
\end{tikzgraph}
&
\begin{tikzgraph}
  \fourflag
  \draw[edge] (a0)--(a3);
  \draw[edge] (a3)--(a1);
  \draw[edge] (a1)--(a0);
  \draw[edge] (a1)--(a2);
  \draw[nedge] (a0)--(a2);
  \draw[nedge] (a2)--(a3);
    \useasboundingbox (-1*\unit,-1*\unit) rectangle (1*\unit,1*\unit);
\end{tikzgraph}
&
\begin{tikzgraph}
  \fourflag
  \draw[edge] (a0)--(a3);
  \draw[edge] (a3)--(a2);
  \draw[edge] (a3)--(a1);
  \draw[edge] (a1)--(a2);
  \draw[edge] (a1)--(a0);
  \draw[nedge] (a2)--(a0);
    \useasboundingbox (-1*\unit,-1*\unit) rectangle (1*\unit,1*\unit);
\end{tikzgraph}
&
\begin{tikzgraph}
  \fourflag
  \draw[edge] (a0)--(a3);
  \draw[edge] (a3)--(a2);
  \draw[edge] (a3)--(a1);
  \draw[edge] (a1)--(a2);
  \draw[edge] (a1)--(a0);
  \draw[edge] (a2)--(a0);
    \useasboundingbox (-1*\unit,-1*\unit) rectangle (1*\unit,1*\unit);
  \end{tikzgraph}
\\
\bottomrule
\end{tabular}}
\end{center}
\caption{The eleven non-isomorphic graphs with $4$ vertices.}\label{fig-four}
\end{figure}


\begin{theorem}
\label{thm-first}
For every $\alpha\in \icc{0}{2/9}$, it holds that $q_\alpha\left(\overline{P_3}+K_3\right)\ge\frac{9}{7}\alpha(1-\alpha)$.
\end{theorem}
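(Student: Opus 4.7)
The plan is to reduce the claim to a single linear inequality at the level of $4$-vertex graphs $F_1,\dots,F_{11}$ and then to verify that inequality by combining the Seidel-minimality hypothesis with one sum-of-squares certificate at $\sigma=1$.

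First, I would observe that in any Seidel-minimal graph every vertex $v$ has degree at most $(n-1)/2$: applying the Seidel-minimality condition to the $1$-system $D=\{v\}$ flips exactly the edges incident to $v$, so the number of edges cannot decrease, which forces $\deg(v)\le(n-1)/2$. Consequently $q_\alpha(K_2)\le 1/2$, and combined with $q_\alpha(K_2)\ge\alpha$ and the fact that $x\mapsto x(1-x)$ is nondecreasing on $\icc{0}{1/2}$ we obtain
\[q_\alpha\left(K_2\right)q_\alpha\left(\overline{K_2}\right)\ge\alpha(1-\alpha).\]
Since $q_\alpha$ is an algebra homomorphism, $q_\alpha(K_2)q_\alpha(\overline{K_2})=q_\alpha(K_2\cdot\overline{K_2})$, where the flag-algebra product $K_2\cdot\overline{K_2}$ expands as an explicit linear combination of the $4$-vertex graphs of Figure~\ref{fig-four} with coefficients $p(K_2,\overline{K_2};F_i)$. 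Thus it suffices to prove the stronger homogeneous inequality
\[q_\alpha\left(\overline{P_3}+K_3\right)\ge\frac{9}{7}\,q_\alpha\left(K_2\cdot\overline{K_2}\right).\]

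Expanding $\overline{P_3}$ and $K_3$ in the same $4$-vertex basis via the averaging relations defining $\KK$, the target reduces to a single linear inequality of the form $q_\alpha\left(\xi_1,\dots,\xi_{11}\right)\ge 0$ with explicit rational coefficients $\xi_i$. To certify this linear form lies in $\CC_\sem(\AA)$, I would add to it nonnegative combinations of three ingredients: (a) the trivial bounds $q_\alpha(F_i)\ge 0$; (b) the Seidel-minimality inequality $q_\alpha\left(\unlab{\overline{P_3}^{1,b}-P_3^{1,b}}{1}\right)\ge 0$ established in Section~\ref{ssect-example}, re-expanded in the $4$-vertex basis; and (c) one sum-of-squares inequality $q_\alpha\left(\unlab{f^2}{1}\right)\ge 0$, where $f\in\AA^1$ is a carefully chosen linear combination of the labeled $2$-vertex flags $K_2^1,\overline{K_2}^1$ and of the labeled $3$-vertex flags $P_3^{1,b},P_3^{1,c},\overline{P_3}^{1,b},\overline{P_3}^{1,c}$ of Figure~\ref{fig:ill}.

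The main obstacle is identifying the right test flag $f$ — equivalently, the right small positive semidefinite matrix in Razborov's SOS framework — so that after unlabeling $\unlab{f^2}{1}$, combining with the Seidel-minimality relation and substituting the product expansion of $K_2\cdot\overline{K_2}$, one obtains exactly the multiplier $9/7$ with nonnegative residual coefficients on each $F_i$. The rational form of the constant together with the tightness at the endpoint $\alpha=2/9$ — where $\frac{9}{7}\cdot\frac{2}{9}\cdot\frac{7}{9}=\frac{2}{9}=\varphi_2(2/9)$ — suggests that the extremal Seidel-minimal configuration at density $2/9$ pins down the zero-eigenvectors of the required matrix, so that $f$ can be guessed by hand rather than via a numerical SDP solver. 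Once $f$ is fixed, the rest of the proof is bookkeeping: computing the eleven coefficients of each contributing term and verifying the resulting rational identity.
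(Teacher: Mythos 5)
Your opening reduction is sound and matches the paper: Seidel minimality applied to the cut $(\{v\},V\setminus\{v\})$ gives $q_\alpha(K_2)\le 1/2$, hence $q_\alpha\left(K_2\times\overline{K}_2\right)\ge\alpha(1-\alpha)$, which is exactly~\eqref{first2}, and it indeed suffices to prove $q_\alpha\left(\overline{P_3}+K_3\right)\ge\frac{9}{7}\,q_\alpha\left(K_2\times\overline{K}_2\right)$. The gap is in how you propose to certify this last inequality, and it is not just that the test flag $f$ is left unspecified: no certificate of the form you describe can exist. Your ingredients are (a) nonnegativity of the $F_i$, (b) the single averaged Seidel inequality $q_\alpha\left(\unlab{\overline{P_3}^{1,b}-P_3^{1,b}}{1}\right)\ge 0$, i.e.\ $q_\alpha\left(\overline{P_3}\right)\ge q_\alpha\left(P_3\right)$ (re-expanding it in the $4$-vertex basis adds no strength), and (c) a square $\unlab{f^2}{1}$, which is nonnegative for \emph{every} positive homomorphism and therefore carries no Seidel-minimality information at all. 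So any nonnegative combination of (a)--(c) would prove the homogeneous inequality for every graph limit satisfying merely $q\left(\overline{P_3}\right)\ge q\left(P_3\right)$. That statement is false: take the limit of the complete bipartite graph between parts of sizes $n/2$ and $n/6$, together with $n/3$ isolated vertices. There $q\left(K_2\right)=\frac16$, $q\left(K_3\right)=0$ and $q\left(P_3\right)=q\left(\overline{P_3}\right)=\frac16$, so (a), (b), (c) all hold, yet $q\left(\overline{P_3}+K_3\right)=\frac16<\frac{9}{7}\cdot\frac16\cdot\frac56=\frac{5}{28}$. (This limit does not contradict the theorem because it is not Seidel-minimal: at a vertex of the $n/2$-part the neighborhood cut has crossing-edge density $\frac1{12}$ versus crossing-non-edge density $\frac1{18}$; the averaged inequality (b) survives only because the slack at other vertices compensates.)

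The paper closes exactly this loophole by injecting Seidel minimality through \emph{labeled} cut inequalities in $\AA^{\overline{K}_2}$ rather than through the averaged $\sigma=1$ inequality: inequality~\eqref{first3} comes from the cut by the set of common neighbors of two non-adjacent labeled vertices, and~\eqref{first4} from the neighborhood cut of one labeled vertex recorded relative to the other; these retain per-pair information that rules out unbalanced configurations like the one above. The proof is then a purely linear combination of~\eqref{first2}, \eqref{first3}, \eqref{first4} with coefficients $\frac97,\frac37,\frac67$ followed by monotone domination by the coefficients of $\overline{P_3}+K_3$ --- no sum-of-squares term is used in this theorem at all (squares only appear later, at $\sigma=K_2$, in the proof of Theorem~\ref{thm:main}). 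To salvage your plan you would have to use the minimality hypothesis pointwise --- e.g.\ multiply the $\sigma=1$ cut inequality by nonnegative elements of $\AA^1$ before unlabeling, or pass to $\sigma=\overline{K}_2$ as the paper does; the plain average plus an SOS cannot reach the constant $\frac97$.
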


\begin{proof}
We first establish three inequalities on the values taken by $q_\alpha$
for various elements of $\AA$.
The choice of the graphs in the sequence defining $q_\alpha$ implies that
$\alpha\le q_\alpha\left(K_2\right)$.
As $q_\alpha\left(\overline{K}_2\right)=1-q_\alpha\left(K_2\right)$ and
$q_\alpha(K_2)\in \icc{0}{1/2}$, we infer that
\begin{equation}
\alpha(1-\alpha)\le q_\alpha\left(K_2\right)q_\alpha\left(\overline{K}_2\right)=
                    q_\alpha\left(K_2\times\overline{K}_2\right)=
		    q_\alpha\left(0,\frac{1}{6},0,\frac{1}{3},\frac{1}{2},\frac{1}{6},\frac{1}{2},0,\frac{1}{3},\frac{1}{6},0\right).\label{first2}
\end{equation}
The other two inequalities follow from the Seidel minimality of graphs
in the sequence defining $q_\alpha$. Consider a graph $G_i$ and
two non-adjacent vertices $v_1$ and $v_2$ (the target vertices of
an embedding of $\overline{K_2}$ in elements of $\FF^{\overline{K_2}}$
are marked by the numbers $1$ and $2$).
Let $A$ be the set of their common
neighbors and $B$ the set of the remaining vertices.
Applying the Seidel minimality to the cut given by $A$ and $B$,
we obtain the following inequality in the limit (the elements of $\FF^{\overline{K_2}}_4$
with a non-edge between a common neighbor of $1$ and $2$ and a vertex that is
not their common neighbor appear with the coefficient $+1$, those with an edge
between two such vertices with the coefficient $-1$).
\[
0\le q_\alpha\left(\unlab{%
\begin{tikzpicture}[baseline=-.8ex]
    \ffrootednonedge
    \draw[edge] (a2)--(a1);
    \draw[edge] (a3)--(a1);
    \draw[nedge] (a3)--(a0);
    \draw[nedge] (a2)--(a0);
    \draw[nedge] (a1)--(a0);
    font=\normalsize
    \node at (1.2*\unit,0) {$+$};
  \begin{scope}[xshift=2.5*\unit]
    \ffrootednonedge
    \draw[edge] (a2)--(a1);
    \draw[edge] (a3)--(a1);
    \draw[edge] (a3)--(a0);
    \draw[nedge] (a2)--(a0);
    \draw[nedge] (a1)--(a0);
    \end{scope}
  \node at (3.8*\unit,0) {$+$};
  \begin{scope}[xshift=5*\unit]
      \ffrootednonedge
    \draw[edge] (a2)--(a1);
    \draw[edge] (a3)--(a1);
    \draw[edge] (a2)--(a0);
    \draw[nedge] (a3)--(a0);
    \draw[nedge] (a1)--(a0);
    \end{scope}
  \node at (6.2*\unit,0) {$-$};
  \begin{scope}[xshift=7.5*\unit]
      \ffrootednonedge
    \draw[edge] (a2)--(a1);
    \draw[edge] (a3)--(a1);
    \draw[edge] (a1)--(a0);
    \draw[nedge] (a2)--(a0);
    \draw[nedge] (a3)--(a0);
    \end{scope}
  \node at (8.7*\unit,0) {$-$};
  \begin{scope}[xshift=10*\unit]
    \ffrootednonedge
    \draw[edge] (a2)--(a1);
    \draw[edge] (a3)--(a1);
    \draw[edge] (a3)--(a0);
    \draw[edge] (a1)--(a0);
    \draw[nedge] (a2)--(a0);
    \end{scope}
  \node at (11.2*\unit,0) {$-$};
  \begin{scope}[xshift=12.5*\unit]
      \ffrootednonedge
    \draw[edge] (a2)--(a1);
    \draw[edge] (a3)--(a1);
    \draw[edge] (a2)--(a0);
    \draw[edge] (a1)--(a0);
    \draw[nedge] (a3)--(a0);
    \end{scope}
  \end{tikzpicture}}{\overline{K}_2}
               \right).
  \]
Evaluating the operator $\unlab{\cdot}{\overline{K}_2}$ yields that
\begin{equation}
0\le q_\alpha\left(0,0,0,\frac{1}{6},0,\frac{1}{3},-\frac{1}{2},0,-\frac{1}{3},0,0
             \right).\label{first3}
\end{equation}
Now, let $A'$ be the neighbors of $v_2$ and $B'$ its non-neighbors.
The Seidel minimality of cuts of this type (the elements of $\FF^{\overline{K_2}}_4$ with a non-edge between a neighbor of $2$ and
a non-neighbor of $2$ appear with the coefficient $+1$ and those with an edge with the coefficient $-1$) yields that
\[
0\le q_\alpha\left(\unlab{%
\begin{tikzpicture}[baseline=-.8ex]
    \ffrootednonedge
    \draw[edge] (a2)--(a1);
    \draw[nedge] (a3)--(a1);
    \draw[nedge] (a3)--(a0);
    \draw[nedge] (a2)--(a0);
    \draw[nedge] (a1)--(a0);
    font=\normalsize
    \node at (1.2*\unit,0) {$+$};
  \begin{scope}[xshift=2.5*\unit]
    \ffrootednonedge
    \draw[edge] (a2)--(a1);
    \draw[edge] (a3)--(a1);
    \draw[nedge] (a3)--(a0);
    \draw[nedge] (a2)--(a0);
    \draw[nedge] (a1)--(a0);
    \end{scope}
  \node at (3.8*\unit,0) {$+$};
  \begin{scope}[xshift=5*\unit]
      \ffrootednonedge
    \draw[edge] (a2)--(a1);
    \draw[edge] (a3)--(a0);
    \draw[nedge] (a3)--(a1);
    \draw[nedge] (a2)--(a0);
    \draw[nedge] (a1)--(a0);
    \end{scope}
  \node at (6.2*\unit,0) {$+$};
  \begin{scope}[xshift=7.5*\unit]
      \ffrootednonedge
    \draw[edge] (a2)--(a1);
    \draw[edge] (a3)--(a1);
    \draw[edge] (a3)--(a0);
    \draw[nedge] (a2)--(a0);
    \draw[nedge] (a1)--(a0);
    \end{scope}
  \node at (8.7*\unit,0) {$-$};
  \begin{scope}[xshift=10*\unit]
    \ffrootednonedge
    \draw[edge] (a1)--(a0);
    \draw[edge] (a2)--(a1);
    \draw[nedge] (a3)--(a1);
    \draw[nedge] (a3)--(a0);
    \draw[nedge] (a2)--(a0);
    \end{scope}
  \node at (11.2*\unit,0) {$-$};
  \begin{scope}[xshift=12.5*\unit]
      \ffrootednonedge
    \draw[edge] (a1)--(a0);
    \draw[edge] (a2)--(a1);
    \draw[edge] (a3)--(a1);
    \draw[nedge] (a3)--(a0);
    \draw[nedge] (a2)--(a0);
    \end{scope}
  \node at (13.6*\unit,0) {$-$};
  \begin{scope}[xshift=14.9*\unit]
      \ffrootednonedge
    \draw[edge] (a1)--(a0);
    \draw[edge] (a2)--(a1);
    \draw[edge] (a3)--(a0);
    \draw[nedge] (a3)--(a1);
    \draw[nedge] (a2)--(a0);
    \end{scope}
  \node at (16.1*\unit,0) {$-$};
  \begin{scope}[xshift=17.4*\unit]
      \ffrootednonedge
    \draw[edge] (a1)--(a0);
    \draw[edge] (a2)--(a1);
    \draw[edge] (a3)--(a1);
    \draw[edge] (a3)--(a0);
    \draw[nedge] (a2)--(a0);
    \end{scope}
  \end{tikzpicture}}{\overline{K}_2}
               \right),
\]
which subsequently implies that
\begin{equation}
0\le q_\alpha\left(0,\frac{1}{3},\frac{2}{3},0,0,0,-\frac{1}{2},0,-\frac{1}{6},0,0
             \right).\label{first4}
\end{equation}
The sum of~\eqref{first2}, \eqref{first3} and~\eqref{first4} with coefficients $9/7$, $3/7$ and $6/7$
is the following inequality:
\begin{equation}
\frac{9}{7}\alpha(1-\alpha)
 \le q_\alpha\left(0,\frac{1}{2},\frac{4}{7},\frac{1}{2},\frac{9}{14},
                   \frac{5}{14},0,0,\frac{1}{7},\frac{3}{14},0
             \right).\label{first0}
\end{equation}
Since $q_\alpha$ is positive, we infer from~\eqref{first0} that
\[\frac{9}{7}\alpha(1-\alpha)\le q_\alpha\left(0,\frac{1}{2},1,\frac{1}{2},1,\frac{1}{2},0,0,\frac{1}{2},\frac{1}{2},1\right)=q_\alpha\left(\overline{P_3}+K_3\right).\]
\end{proof}

\section{Improved bound}
\label{sect-second}
This section is devoted to the proof of Theorem~\ref{thm:main}. We equivalently
prove the following.
\begin{theorem}
For every $\alpha\in \icc{0}{2/9}$, it holds that $q_\alpha\left(\overline{P_3}+K_3\right)\ge\frac{3}{4}\alpha(3-\sqrt{8\alpha+1})$.
\end{theorem}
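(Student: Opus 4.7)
The plan is to refine the argument of Theorem~\ref{thm-first} by enlarging the set of valid flag-algebra inequalities with one or more Cauchy--Schwarz-type positivity constraints; these supply the quadratic information responsible for the square root in the bound. Writing $\beta\coloneqq q_\alpha(K_2)\ge\alpha$ throughout, I aim to prove a lower bound for $q_\alpha(\overline{P_3}+K_3)$ in terms of $\beta$ that is monotone nondecreasing on $\icc{0}{2/9}$; the worst case is then $\beta=\alpha$, which yields the stated bound.

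First I would retain the three inequalities used in the proof of Theorem~\ref{thm-first}: the product inequality $q_\alpha(K_2\times\overline{K_2})\ge\alpha(1-\alpha)$, and the two Seidel-minimality inequalities obtained from cuts rooted at the type $\overline{K_2}$. These alone reach only $\tfrac{9}{7}\alpha(1-\alpha)$, but the strictly positive slack in the coefficients on the right-hand side of~\eqref{first0} shows that a further nonnegative quantity can be subtracted to tighten the estimate. Second, I would add one or more Cauchy--Schwarz constraints of the form $\unlab{(\sum_i\lambda_i F_i)^2}{\sigma}\ge 0$, valid because $q^\sigma(F)^2$ is almost surely nonnegative and hence its average over random embeddings of $\sigma$ is nonnegative. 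The natural types to try here are $\sigma=1$ with $F_i$ ranging over $P_3^{1,b},\overline{P_3}^{1,b},P_3^{1,c},\overline{P_3}^{1,c}$, and $\sigma=\overline{K_2}$ with $F_i$ ranging over the five one-vertex extensions; the resulting quantities $q_\alpha^\sigma(F_i)$ admit a clean conditional-density interpretation (degree, non-degree, co-degree into a fixed pair) which dovetails with the Seidel-minimality constraints already in play.

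Finally I would take a linear combination of all of these inequalities with multipliers chosen to minimize the resulting lower bound on $q_\alpha(\overline{P_3}+K_3)$. This is a small semidefinite program parametrized by $\alpha$, and the appearance of $\sqrt{8\alpha+1}$ indicates that at the optimum the dual multipliers satisfy a quadratic relation with discriminant $8\alpha+1$. That the bound equals $2/9$ at $\alpha=2/9$, matching the Boros--F\"uredi configuration, strongly suggests that the extremal Cauchy--Schwarz direction is low-rank and built from the symmetric extremal point configuration; once that direction is guessed correctly, the argument reduces to checking that the resulting linear combination has nonnegative coefficient on each of the eleven four-vertex graphs of Figure~\ref{fig-four} after invoking $\overline{P_3}+K_3\ge 0$. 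The main obstacle I expect is precisely this identification of the correct squared combination: it is the step that converts the linear arithmetic behind Theorem~\ref{thm-first} into the quadratic arithmetic behind the improved bound, and the clean closed form $\tfrac{3}{4}\alpha(3-\sqrt{8\alpha+1})$ then emerges as a consequence of having chosen the correct extremal direction rather than as a computational coincidence.
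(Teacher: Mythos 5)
Your overall framework matches the paper's: parametrize by $\beta=q_\alpha(K_2)$, add a square (Cauchy--Schwarz) constraint of the form $q_\alpha\bigl(\unlab{F^2}{\sigma}\bigr)\ge 0$ to the linear constraints already used for Theorem~\ref{thm-first}, take a linear combination, and finish by monotonicity of $x\mapsto\frac34x(3-\sqrt{8x+1})$. But as written there is a genuine gap: the entire quantitative content of the theorem lives in the choice of the squared element and of the $\beta$-dependent multipliers, and you defer exactly this to an unspecified SDP search plus a guess about the ``extremal direction.'' Worse, the types you nominate for the square ($\sigma=1$ with the flags of Figure~\ref{fig:ill}, and $\sigma=\overline{K_2}$ with its one-vertex extensions) do not include the type the paper's proof actually uses: the square is taken in the algebra of the \emph{edge} type $\sigma=K_2$, applied to the combination (third vertex adjacent to neither root) $-\,\xi\,\times$ (third vertex adjacent to exactly one root), with the parameter tuned to $\xi=\frac{\sqrt{1+8\beta}-1}{2\beta}-1$. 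The paper then combines this with the normalization $1=q_\alpha(K_1)$, the equality $\beta=q_\alpha(K_2)$, the equality $0=q_\alpha(K_2\times\overline{K_2}-\beta\overline{K_2})$ (not the inequality $q_\alpha(K_2\times\overline{K_2})\ge\alpha(1-\alpha)$ you propose to retain), and a single Seidel-minimality inequality, with explicit coefficients such as $\frac{3\beta}{\sqrt{1+8\beta}}$, and checks coordinate-wise domination by $\overline{P_3}+K_3$ over the eleven graphs of Figure~\ref{fig-four}. Without exhibiting such a square and such multipliers, the claim that the closed form $\frac34\alpha(3-\sqrt{8\alpha+1})$ ``emerges'' is not a proof, and a search restricted to the two types you list may simply fail to find a certificate of this strength.

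A second, smaller gap is the reduction from $\beta$ to $\alpha$. You argue that the $\beta$-bound is monotone on $\icc{0}{2/9}$ so the worst case is $\beta=\alpha$, but $\beta$ can be anywhere in $\icc{\alpha}{1/2}$, including values above $2/9$. The paper closes this by observing both that the function is increasing on $\icc{0}{2/9}$ and that $\frac34x(3-\sqrt{8x+1})\ge 2/9$ for $x\in\icc{2/9}{1/2}$, which in turn is at least the claimed bound for every $\alpha\in\icc{0}{2/9}$; your argument needs this second half as well. Your final verification step (nonnegativity of the coefficients of $\overline{P_3}+K_3$ minus the combination, using positivity of $q_\alpha$) is the right one, and the remark that only the multipliers of the inequalities (not of the equalities involving $\beta$) need to be nonnegative would also be worth making explicit.
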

\begin{proof}
Let $\beta\coloneqq q_\alpha\left(K_2\right)$. Note that $\beta\in\icc{\alpha}{1/2}$. We first derive two equalities using
the fact that $q_\alpha$ is a homomorphism from $\AA$ to $\RR$.
The first equation is a trivial corollary of this fact.
\begin{equation}
1=q_\alpha\left(K_1\right)=q_\alpha\left(1,1,1,1,1, 1,1,1,1,1, 1\right).
\label{secAA}
\end{equation}
The choice of $\beta$ implies that
\begin{equation}
\beta=q_\alpha\left(K_2\right)
=q_\alpha\left(0,\frac{1}{6},\frac{1}{3},\frac{1}{3},\frac{1}{2},\frac{1}{2},\frac{1}{2},\frac{2}{3},\frac{2}{3},\frac{5}{6},1\right).
\label{secA}
\end{equation}
The next equality is little bit more tricky. We use that $q_\alpha\left(K_2\right)-\beta=0$.
\begin{equation}
0=(q_\alpha\left(K_2\right)-\beta)q_\alpha\left(\overline{K}_2\right)=q_\alpha\left(K_2\times\overline{K}_2-\beta\overline{K}_2\right).
\label{sec2}
\end{equation}
Again, we express~\eqref{sec2} in terms of the four-vertex graphs:
\begin{equation}
0=q_\alpha\left(-\beta,\frac{1-5\beta}{6},\frac{-2\beta}{3},\frac{1-2\beta}{3},\frac{1-\beta}{2},
             \frac{1-3\beta}{6},\frac{1-\beta}{2},\frac{-\beta}{3},\frac{1-\beta}{3},\frac{1-\beta}{6},
	     0\right).
\label{secB}
\end{equation}
The next inequality is the inequality~\eqref{first4} established in the proof of Theorem~\ref{thm-first}.
We copy the inequality to ease the reading.
\begin{equation}\label{secC}
0\le q_\alpha\left(0,0,0,\frac{1}{6},0,\frac{1}{3},-\frac{1}{2},0,-\frac{1}{3},0,0\right).
\end{equation}
The final inequality is obtained by considering random homomorphisms $q_\alpha^{K_2}$.
Since $q_\alpha^{K_2}$ is a homomorphism, it holds for every choice of $q_\alpha^{K_2}$ and every $\xi\in\RR$ that
\begin{align}
0&\le q_\alpha^{K_2}\left(
\begin{tikzpicture}[baseline=-.8ex]
  \tfrootededge
    \draw[nedge] (a2)--(a0);
    \draw[nedge] (a1)--(a0);
    font=\normalsize
    \node at (1.8*\unit,0) {$-\xi\times$};
  \begin{scope}[xshift=3.5*\unit]
    \tfrootededge
    \draw[edge] (a1)--(a0);
    \draw[nedge] (a2)--(a0);
    \end{scope}
  \node at (5.4*\unit,0) {$-\xi\times$};
  \begin{scope}[xshift=7.1*\unit]
      \tfrootededge
    \draw[edge] (a2)--(a0);
    \draw[nedge] (a1)--(a0);
    \end{scope}
  \end{tikzpicture}\right)^2\nonumber\\
&=q_\alpha^{K_2}\left(\left(
\begin{tikzpicture}[baseline=-.8ex]
  \tfrootededge
    \draw[nedge] (a2)--(a0);
    \draw[nedge] (a1)--(a0);
    font=\normalsize
    \node at (1.8*\unit,0) {$-\xi\times$};
  \begin{scope}[xshift=3.5*\unit]
    \tfrootededge
    \draw[edge] (a1)--(a0);
    \draw[nedge] (a2)--(a0);
    \end{scope}
  \node at (5.4*\unit,0) {$-\xi\times$};
  \begin{scope}[xshift=7.1*\unit]
      \tfrootededge
    \draw[edge] (a2)--(a0);
    \draw[nedge] (a1)--(a0);
    \end{scope}
  \end{tikzpicture}\right)^2\right).
\label{sec3}
\end{align}
Hence,
\begin{equation}
\begin{split}
0&\le q_\alpha\left(\unlab{\left(%
\begin{tikzpicture}[baseline=-.8ex]
  \tfrootededge
    \draw[nedge] (a2)--(a0);
    \draw[nedge] (a1)--(a0);
    font=\normalsize
    \node at (1.8*\unit,0) {$-\xi\times$};
  \begin{scope}[xshift=3.5*\unit]
    \tfrootededge
    \draw[edge] (a1)--(a0);
    \draw[nedge] (a2)--(a0);
    \end{scope}
  \node at (5.4*\unit,0) {$-\xi\times$};
  \begin{scope}[xshift=7.1*\unit]
      \tfrootededge
    \draw[edge] (a2)--(a0);
    \draw[nedge] (a1)--(a0);
    \end{scope}
  \end{tikzpicture}\right)^2}{K_2}\right)\\
&= q_\alpha\left(\left\llbracket
\begin{tikzpicture}[baseline=-.8ex]
    \ffrootededge
  \draw[nedge] (a3)--(a0);
  \draw[nedge] (a0)--(a2);
  \draw[nedge] (a1)--(a0);
  \draw[nedge] (a1)--(a3);
  \draw[nedge] (a2)--(a1);
    font=\normalsize
\node at (1.2*\unit,0) {$+$};
  \begin{scope}[xshift=2.4*\unit]
    \ffrootededge
    \draw[edge] (a1)--(a0);
  \draw[nedge] (a3)--(a0);
  \draw[nedge] (a2)--(a0);
  \draw[nedge] (a1)--(a3);
  \draw[nedge] (a2)--(a1);
    \end{scope}
  \node at (4.4*\unit,0) {$+\xi^2\cdot\bigg($};
  \begin{scope}[xshift=6.5*\unit]
      \ffrootededge
    \draw[edge] (a3)--(a0);
    \draw[edge] (a3)--(a1);
    \draw[nedge] (a1)--(a0);
    \draw[nedge] (a2)--(a0);
  \draw[nedge] (a2)--(a1);
    \end{scope}
\node at (7.7*\unit,0) {$+$};
      \begin{scope}[xshift=8.9*\unit]
      \ffrootededge
    \draw[edge] (a2)--(a0);
    \draw[edge] (a2)--(a1);
    \draw[nedge] (a1)--(a0);
    \draw[nedge] (a3)--(a0);
    \draw[nedge] (a3)--(a1);
    \end{scope}
    \node at (10.1*\unit,0) {$+$};
      \begin{scope}[xshift=11.3*\unit]
      \ffrootededge
    \draw[nedge] (a2)--(a0);
    \draw[nedge] (a2)--(a1);
    \draw[edge] (a1)--(a0);
    \draw[edge] (a3)--(a0);
    \draw[edge] (a3)--(a1);
    \end{scope}
    \node at (12.5*\unit,0) {$+$};
      \begin{scope}[xshift=13.7*\unit]
      \ffrootededge
    \draw[edge] (a2)--(a0);
    \draw[edge] (a2)--(a1);
    \draw[edge] (a1)--(a0);
    \draw[nedge] (a3)--(a0);
    \draw[nedge] (a3)--(a1);
    \end{scope}
    \node at (14.9*\unit,0) {$+$};
      \begin{scope}[xshift=16.1*\unit]
      \ffrootededge
      \draw[nedge] (a2)--(a0);
    \draw[edge] (a2)--(a1);
    \draw[nedge] (a1)--(a0);
    \draw[edge] (a3)--(a0);
    \draw[nedge] (a3)--(a1);
\end{scope}
    \node at (17.3*\unit,0) {$+$};
      \begin{scope}[xshift=18.5*\unit]
      \ffrootededge
    \draw[nedge] (a2)--(a0);
    \draw[edge] (a2)--(a1);
    \draw[edge] (a1)--(a0);
    \draw[edge] (a3)--(a0);
    \draw[nedge] (a3)--(a1);
    \end{scope}
    \node at (19.7*\unit,0) {$\bigg)$};
   \end{tikzpicture}\right.\right.\\
&\quad\quad\quad\quad
   \left.\left.\begin{tikzpicture}[baseline=-.8ex]
    \node at (0*\unit,0) {$-\xi\cdot\bigg($};
      \begin{scope}[xshift=2*\unit]
      \ffrootededge
    \draw[nedge] (a2)--(a0);
    \draw[nedge] (a2)--(a1);
    \draw[nedge] (a1)--(a0);
    \draw[nedge] (a3)--(a0);
    \draw[edge] (a3)--(a1);
    \end{scope}
    \node at (3.2*\unit,0) {$+$};
      \begin{scope}[xshift=4.4*\unit]
      \ffrootededge
    \draw[nedge] (a2)--(a0);
    \draw[nedge] (a2)--(a1);
    \draw[edge] (a1)--(a0);
    \draw[nedge] (a3)--(a0);
    \draw[edge] (a3)--(a1);
    \end{scope}
    \node at (5.6*\unit,0) {$+$};
      \begin{scope}[xshift=6.8*\unit]
      \ffrootededge
    \draw[edge] (a2)--(a0);
    \draw[nedge] (a2)--(a1);
    \draw[nedge] (a1)--(a0);
    \draw[nedge] (a3)--(a0);
    \draw[nedge] (a3)--(a1);
    \end{scope}
    \node at (8*\unit,0) {$+$};
      \begin{scope}[xshift=9.2*\unit]
      \ffrootededge
    \draw[edge] (a2)--(a0);
    \draw[nedge] (a2)--(a1);
    \draw[edge] (a1)--(a0);
    \draw[nedge] (a3)--(a0);
    \draw[nedge] (a3)--(a1);
    \end{scope}
    \node at (10.4*\unit,0) {$\bigg)$};
     \end{tikzpicture}\right\rrbracket_{K_2}\right).
     \end{split}
\label{sec4}
\end{equation}
Evaluating the operator $\unlab{\cdot}{K_2}$ yields the following inequality.
\begin{equation}
0\le q_\alpha\left(0,\frac{1}{6},\frac{1}{3},\frac{-\xi}{3},0,
              \frac{\xi^2-2\xi}{6},\frac{\xi^2}{2},\frac{2\xi^2}{3},\frac{\xi^2}{6},0,
	      0\right).
\label{secD}
\end{equation}
As an example of the evaluation, consider the third coordinate: the only four-vertex graph
with two non-incident edges appears with the coefficient one in the sum. The probability that
a randomly chosen pair of vertices in the four-vertex graph formed by two non-incident edges
yields this term of the sum is $1/3$ which is the third coordinate of the final vector.

Now, let us sum the equations and inequalities~\eqref{secAA}, \eqref{secA}, \eqref{secB}, \eqref{secC} and~\eqref{secD}
with coefficients $\frac{3\beta}{\sqrt{1+8\beta}}$,
$\frac{3}{4}\cdot\left(3-\frac{5+8\beta}{\sqrt{1+8\beta}}\right)$, $\frac{3}{\sqrt{1+8\beta}}$, $\frac{3}{\sqrt{1+8\beta}}$ and
$\frac{3}{4}\cdot\left(1+\frac{1+4\beta}{\sqrt{1+8\beta}}\right)$, respectively, and
substitute $\xi=\frac{\sqrt{1+8\beta}-1}{2\beta}-1$.
Note that the coefficients for the inequalities~\eqref{secC} and~\eqref{secD} are non-negative.
So, we eventually deduce that
\begin{multline}\label{sec0}
\frac{3}{4}\beta(3-\sqrt{8\beta+1})\le
      q_\alpha\Bigg(0,\frac{1}{2},1-\frac{1}{\sqrt{1+8\beta}},\frac{1}{2},\frac{9}{8}-\frac{3+12\beta}{8\sqrt{1+8\beta}},
\frac{1}{2},\\
0,0,\frac{9}{8}-\frac{15+12\beta}{8\sqrt{1+8\beta}},\frac{15}{8}-\frac{21+20\beta}{8\sqrt{1+8\beta}},\frac{9}{4}-\frac{15+12\beta}{4\sqrt{1+8\beta}}\Bigg).
%
\end{multline}
Finally, since $q_\alpha$ is positive, we derive from~\eqref{sec0} (the fifth, ninth, tenth and eleventh coordinates
are decreasing for $\beta\in [0,1]$ since their derivates are positive in this range) that
\begin{equation}
\frac{3}{4}\beta(3-\sqrt{8\beta+1})\le
 q_\alpha\left(0,\frac{1}{2},1,\frac{1}{2},1,\frac{1}{2},0,0,\frac{1}{2},\frac{1}{2},1\right)=q_\alpha\left(\overline{P_3}+K_3\right).
\label{sec-final}
\end{equation}

Observe that the function $x\mapsto\frac{3}{4}x(3-\sqrt{8x+1})$ is
increasing on the interval $\icc{0}{2/9}$ and
that
\[\frac{3}{4}x(3-\sqrt{8x+1})\ge 2/9=\frac{3}{4}\cdot\frac{2}{9}\left(3-\sqrt{8\cdot2/9+1}\right)\]
for $x\in\icc{2/9}{1/2}$. Hence, the left hand side of~\eqref{sec-final} is at least $\frac{3}{4}\alpha(3-\sqrt{8\alpha+1})$
for $\alpha\in\icc{0}{2/9}$ as asserted in the statement of the theorem.
\end{proof}

\section{Conclusion}
Using more sophisticated methods, we have been able to further improve
the bounds on $\varphi_2(\alpha)$. However, the proof becomes extremely
complicated and since we have not been able to prove that
\[\varphi_2(\alpha)=\frac{3\alpha(1+\sqrt{1-4\alpha})}{4},\]
which is the bound given by the best known example, we have decided
not to further pursue our work in this direction.
To show the limits of our current approach, let us mention that
Theorem~\ref{thm:main} asserts that $\varphi_2(1/12)\ge 0.10681$ and
we can push the bound to $\varphi_2(1/12)\ge 0.11099$;
the simple bound is $0.08333$ and the expected bound is $0.11353$
for this value.

We have also attempted together with Andrzej Grzesik to apply
this method for improving bounds on $\varphi_3$. Though we have
been able to obtain some improvements, e.g., we can show that
$\varphi_3(1/20)\ge 0.05183$, the level of technicality of
the argument seems to be too large for us to be able to report
on our findings in an accessible way at this point.

\bigskip
\textbf{Acknowledgment.} The authors are grateful to Ji\v r\' i Matou\v sek
for drawing to their attention the manuscript on the subject
that he coauthored with Uli Wagner~\cite{MaWa11}. In particular, they thank him for
pointing out their notion of pagoda and its applications.

\end{document}